\newtheorem{lem}{Lemma}
\newtheorem{theor}{Theorem}
\newtheorem{cor}{Corolary}
\theoremstyle{remark}
\newtheorem{rem}{Remark}
\newcommand{\R}{\mathbb{R}}
\newcommand{\sA}{\mathscr{A}}
\newcommand{\sB}{\mathscr{B}}
\newcommand{\sC}{\mathscr{C}}
\newcommand{\eltwo}{\mathcal{L}_2}
\newcommand{\expect}{\mathbb{E}}
\newcommand{\WGN}{\widehat{G}_N}
\newcommand{\cover}{\widehat{G}}
\newcommand{\entr}{\mathcal{H}}
\DeclareMathOperator{\pr}{pr}
\DeclareMathOperator{\Aut}{Aut}
\DeclareMathOperator{\Ball}{Ball}
\DeclareMathOperator{\Prob}{Pr}
\DeclareMathOperator{\Cov}{Cov}
\DeclareMathOperator{\Var}{Var}
\begin{document}

\author{Andrei Alpeev  \footnote{Chebyshev Laboratory, St. Petersburg State University, 14th Line, 29b, Saint Petersburg, 199178 Russia, alpeevandrey@gmail.com}}
\title{Entropy inequalities and exponential decay of correlations for unique Gibbs measures on trees. \footnote{This research is supported by ``Native towns'', a social investment program of PJSC ``Gazprom Neft''}}

\maketitle

\begin{abstract}
In a recent paper by A. Backhausz, B. Gerencs\'er and V. Harangi, it was shown that factors of independent identically distributed random processes (IID) on trees obey certain geometry-driven inequalities. In particular, the mutual information shared between two vertices decays exponentially, and there is an explicit bound for this decay. In this note we show that all of these inequalities could be verbatim translated to the setting of factors of processes driven by unique Gibbs measures. As a consequence, we show that correlations decay exponentially for unique Gibbs measures on trees.
\end{abstract}

keywords: Gibbs measure, regular tree, exponential decay of correlations, Bethe lattice, random coverings.
\section{Introduction}

The paper \cite{BGH} provided a unified approach to entropy inequalities for factors of IID's on trees. It was shown that all known entropy inequalities in this setting follow by certain combinatorial constructions from the ``general edge-vertex`` entropy inequality. The purpose of this note is to show that this general inequality holds in a broader setting of factors of Gibbs processess whose distribution is a unique Gibbs measure, thus transferring all the entropy inequalities.


Let $T_d$ be a $d$-regular tree with vertex set $V(T_d)$. Let $A$ be a finite set ({\em alphabet}). 
Assume that to each vertex $v$ of the tree a function ({\em potential}) $\psi_v: A^{V(T)} \to \R$ is assigned. We would like to require that this potential is ``local'' and ``symmetric''. 
Let's state these requirements formally. Potential $(\psi_v)_{v \in v(T_d)}$ is local if for each $v \in V(T_d)$ there is a postive integer $D$ and function $\psi'_v: A^{\Ball(v,D)} \to \R$ such that $\psi_v = \psi'_v \circ \pr_D$, here $\pr_{\Ball(v,D)}$ stands for the natural projection $A^{V(T_d)} \to A^{\Ball(v,D)}$.
Note that the action of the automorphism group $\Aut(T_d)$ on $T_d$ is lifted naturally to the action on $A^{V(T_d)}$:
\begin{equation*}
(\gamma\omega)(v) = \omega(\gamma^{-1} v),
\end{equation*}
for each $v \in V(T_d)$ and $\gamma \in \Aut(T_d)$.
By saying that the potential is symmetric we mean that for each $v \in V(T_d)$ and $\gamma \in \Aut(T_d)$ holds $\psi_{\gamma v}(\gamma \omega) = \psi_v(\omega)$. We note that due to the latter requirement, the locality of the potential is uniform: there is a positive integer $R$ such that for each $v \in V(T_d)$ there is a function $\psi^R_v: A^{\Ball(v,R)} \to \R$ satisfying $\psi_v = \psi^R_v \circ \pr_{\Ball(v, R)}$. This means that we can define $\psi_v$ on $A^W$ for any $W \supset \Ball(v,R)$. We denote $\partial^R(W)$ the $R$-neighborhood of $W$.  
 
A Borel probability measure $\mu$ on $A^{V(T_d)}$ is called a {\em Gibbs measure} if for every $\Lambda \Subset V(T_d)$, for any $s \in A^\Lambda$ and $t \in A^{\partial^{2R} \Lambda}$ we have the following for the conditional probability 
\begin{equation*}
\Prob(\pr_{\Lambda}(\omega) = s \vert \pr_{\partial^{2R}\Lambda}(\omega) = t ) = 
\frac{\exp\left( -\sum_{v \in \Lambda \cap \partial^R \Lambda} \psi_v(s \sqcup t)   \right)}
{Z_{\Lambda,t} },
\end{equation*}
where $\omega$ is distributed according to $\mu$, and $Z_{\Lambda,t}$ stands for the normalizing coefficient
\begin{equation*}
\sum_{s' \in A^{\Lambda}}\exp\left( -\sum_{v \in \Lambda \cap \partial^R \Lambda} \psi_v(s' \sqcup t)   \right).
\end{equation*}
A Gibbs process is simply a process whose distribution is a Gibbs measure. 

As we will show in this note, all the inequalities from \cite{BGH} could be transferred to the setting of factors of unique Gibbs measure processes.
We will not re-write all of these, but the most striking one: 

\begin{theor}\label{th: decay of mutual info}
Let $T_d$ be a $d$-regular tree, $d>2$. Let $A$ be a finite set. Let $(\psi)_v$ be an $\Aut(T_d)$-invariant potential such  that there is a unique Gibbs measure for it. Let $(X_v)$ be the process distributed according to this measure. Then the following holds for any two vertices $u$ and $v$ of $T_d$ with distance $l$ between them:
\begin{equation*}
\frac{I(X_u,X_v)}{H(X_v)} \leq \left\lbrace 
\begin{array}{cl}
\frac{2}{d(d-1)^l} & \textrm{if k = 2l + 1 is odd,}\\
\frac{1}{(d-1)^l} & \textrm{if k = 2l is even.}
\end{array}\right.
\end{equation*}

\end{theor}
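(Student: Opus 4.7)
My plan is to follow the framework of \cite{BGH}: the paper there shows that a single ``general edge-vertex entropy inequality'' is the combinatorial seed from which all the mutual-information bounds, including the one stated here, grow. The task therefore reduces to establishing that very inequality in the present Gibbs setting; once it is available, the combinatorial derivation of \cite{BGH} that yields $\frac{2}{d(d-1)^l}$ at odd distance and $\frac{1}{(d-1)^l}$ at even distance applies without change.

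To prove the edge-vertex inequality for the unique Gibbs measure $\mu$ on $T_d$, I would use the machinery of random coverings. Fix a finite $d$-regular base graph and let $\WGN$ denote a random $N$-fold cover. The $\Aut(T_d)$-symmetry of $(\psi_v)$ lets the potential descend to $\WGN$, where it produces a well-defined finite-state Gibbs measure $\mu_N$ invariant under the deck-transformation group. Two facts drive the argument: (i) as $N\to\infty$ the rooted random graph $\WGN$ converges in the Benjamini--Schramm sense to $T_d$, and (ii) any local limit of the $\mu_N$ satisfies the DLR equations on $T_d$ with respect to the same potential, so by the uniqueness hypothesis it must coincide with $\mu$. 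Consequently, one- and two-point marginals of $\mu_N$ converge to those of $\mu$, and hence the corresponding entropies converge by continuity of Shannon entropy on the finite alphabet $A$.

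On each finite cover the measure $\mu_N$ lives on a finite, $d$-regular, deck-transitive graph whose girth tends to infinity. In this setting the edge-vertex inequality is essentially combinatorial: the chain rule, subadditivity, and local tree structure give it for any invariant measure, exactly as in the IID argument of \cite{BGH}, which only uses local independence at scales smaller than the girth. Passing to the limit via (i)--(ii), the inequality holds for $(X_v)$ on $T_d$; combining with the mutual-information lemmas of \cite{BGH} then delivers Theorem~\ref{th: decay of mutual info}.

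The delicate point is step (ii): showing that Benjamini--Schramm limits of the $\mu_N$ are $T_d$-Gibbs. The DLR conditions depend only on an $R$-neighborhood, and such neighborhoods are balls in $T_d$ with probability tending to $1$ as $N\to\infty$, so the conditional probabilities of the limit measure match those of the specification on $T_d$; uniqueness then identifies the limit with $\mu$. I expect this identification, together with keeping uniform control of the relevant finite entropies as one passes to the limit, to be the main technical obstacle.
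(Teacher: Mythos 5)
You are right that the theorem reduces, exactly as in the paper, to the ``general edge-vertex inequality'' plus the combinatorial constructions of \cite{BGH}; the problem is your proof of that inequality. The step asserting that on a finite $d$-regular cover of large girth the inequality $\sum_{v}(\deg(v)-1)\entr(\mu_v)\le\sum_{e}\entr(\mu_e)$ holds ``for any invariant measure'' by chain rule, subadditivity and local tree structure is false. Consider the coloring of $\WGN$ in which every vertex receives one common uniformly random bit: this measure is invariant under all deck transformations, yet $\sum_e \entr(\mu_e)=|E|=\tfrac d2|V|$ while $\sum_v(\deg(v)-1)\entr(\mu_v)=(d-1)|V|$, so the inequality fails for every $d>2$ (and the same example, viewed on $T_d$, shows it is not a consequence of invariance plus large girth). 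You have also mischaracterized the argument of \cite{BGH} that you intend to transplant: it is not an entropy-calculus argument valid for all invariant measures, but a first-moment counting argument over random coverings — the expected number of colorings $c$ of $\WGN$ with $\lVert\mu_e^c-\mu_e\rVert\le\varepsilon$ is $\exp\bigl(N\bigl(\sum_e\entr(\mu_e)-\sum_v(\deg(v)-1)\entr(\mu_v)+o(1)\bigr)\bigr)$, and nonnegativity of the exponent follows only after one proves that such a coloring exists with probability $1-o(1)$.

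Your local-limit step (ii) is sound and corresponds to the paper's stability lemma for unique Gibbs measures; it yields that, sampling $c$ from the Gibbs measure transferred to the cover (composed with the factor map), the \emph{expected} empirical statistics $\expect\,\mu_e^c$ converge to $\mu_e$. But existence of a good coloring requires \emph{concentration} of $\mu_e^c$ around its mean, and this is precisely where the Gibbs setting differs from IID: in \cite{BGH} concentration is a standard measure-concentration estimate exploiting independence, while here the paper must substitute a second-moment (Chebyshev) argument whose engine is a covariance-decay bound for unique Gibbs measures, proved via the Markov property, martingale convergence and triviality of the tail $\sigma$-algebra, together with the stability lemma so that the bound applies on the finite covers (whose Gibbs structure differs from the tree's outside large balls) and not merely on $T_d$. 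Your proposal contains no substitute for this concentration step, so the argument does not close. Two smaller omissions: the edge-vertex inequality must be proved for $\Gamma$-equivariant \emph{factors} of the Gibbs process (measurable maps approximated by cellular ones), since the derivation of the mutual-information bound applies it to specially tailored factors, whereas your sketch treats only the Gibbs measure itself; and convergence of entropies of marginals plays no role in the actual mechanism, so ``uniform control of finite entropies'' is not the real obstacle.
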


The interesting thing is that we do not impose any restrictions on the model apart from it being symmetric with respect to the tree automorphisms and having unique Gibbs measure. 

In \cite{BGH} the idea was proposed to show that certain processes (including Gibbs proccesses) are not factors of IID based on their failure to satisfy the entropy inequalities. Our results show that this approach will fail to distinguish factors of IID processes from factors of unique Gibbs measure processes.

Models of statistical physics on regualar trees (Bethe lattices) have been extensively studied, see e.g. \cite{Ba}. 
Gibbs processes on infinite graphs and their approximations by big finite graphs have been studied in \cite{DM10a}, \cite{DM10b}. 
In my paper \cite{A15}, it was shown that for unique Gibbs measures, finite models of infinite Gibbs structures admitting unique Gibbs measure provide a way to compute the sofic entropy.
In works \cite{AuP17} and \cite{A17} the random ordering technique was used to assess the asymptotic behaviour of certain classes of Gibbs processes on sequences of finite graphs. 
 
One particular property of interest is the exponential decay of correlations. By a symmetric observable we mean a collection of functions $(\alpha_v)_{v \in V(T)}$ satisfying the same requirements as potential (it should be symmetric and local), but taking values in a finite set $B$ (we will call it a real observable if it takes values in $\R$). In fact, previous theorem holds for any symmetric observable. From that we may deduce that there is an exponential correlation decay:

\begin{cor}
In the setting of the previous theorem, for any symmetric real observable $(\alpha_u)$ there is a constant $C$ such that 
\begin{equation*}
\Cov_{\nu}(\alpha_u,\alpha_v) \leq C (d-1)^{-k/4}.
\end{equation*}
\end{cor}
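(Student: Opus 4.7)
The plan is to deduce the covariance bound from Theorem~\ref{th: decay of mutual info} by combining Pinsker's inequality with the elementary estimate of covariance via total variation distance. First, since the alphabet $A$ is finite and a symmetric observable $\alpha_v$ depends only on the restriction to some ball $\Ball(v, R')$, each $\alpha_v$ takes only finitely many real values; in particular, by symmetry, both $M := \sup_v \|\alpha_v\|_\infty$ and $H(\alpha_v)$ are finite and independent of $v$. Granting the remark that Theorem~\ref{th: decay of mutual info} extends verbatim to symmetric observables in place of the raw process $(X_v)$, one obtains $I(\alpha_u,\alpha_v) \leq H(\alpha_v)\, C_1\, (d-1)^{-k/2}$ for some $C_1$ depending only on $d$ (both parities of $k$ produce bounds of this shape).

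Next I would apply Pinsker's inequality. Writing $\nu_{uv}$ for the joint distribution of $(\alpha_u,\alpha_v)$ and $\nu_u,\nu_v$ for its marginals, this gives
\begin{equation*}
\|\nu_{uv} - \nu_u \otimes \nu_v\|_{\mathrm{TV}} \;\leq\; \sqrt{\tfrac{1}{2}\, I(\alpha_u,\alpha_v)} \;\leq\; C_2\,(d-1)^{-k/4},
\end{equation*}
with $C_2$ depending only on $d$ and the (uniform) value of $H(\alpha_v)$. Finally, for any two bounded real random variables $X,Y$ with $\|X\|_\infty,\|Y\|_\infty \leq M$ and joint distribution $\rho$ having marginals $\rho_X,\rho_Y$, the identity $\Cov(X,Y) = \sum_{a,b} ab(\rho(a,b) - \rho_X(a)\rho_Y(b))$ immediately yields $|\Cov(X,Y)| \leq 2M^2\, \|\rho - \rho_X \otimes \rho_Y\|_{\mathrm{TV}}$. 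Plugging in $X=\alpha_u$, $Y=\alpha_v$ and combining with the previous display delivers $|\Cov_\nu(\alpha_u,\alpha_v)| \leq C\,(d-1)^{-k/4}$ with $C = 2M^2 C_2$.

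The only nontrivial input is the extension of Theorem~\ref{th: decay of mutual info} from single-vertex values to arbitrary symmetric observables, which the author flags without proof just before the corollary; once that is granted, the remainder is the routine combination of Pinsker's inequality with the trivial total-variation bound on covariance, and the exponent $k/4$ arises precisely from the square-root loss in Pinsker applied to a mutual information bound of order $(d-1)^{-k/2}$.
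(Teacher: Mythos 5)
Your proposal is correct and follows essentially the same route as the paper: bound the covariance by the total-variation distance between the joint law of $(\alpha_u,\alpha_v)$ and the product of its marginals, and bound that distance by the square root of the mutual information, which Theorem~\ref{th: decay of mutual info} (extended to symmetric observables, as the paper flags) controls. The only difference is that you invoke the standard Pinsker inequality for the middle step, whereas the paper derives the same quadratic lower bound on $I(\alpha_u,\alpha_v)$ by an ad hoc second-derivative argument for the entropy near $\eta\otimes\eta$; your version is, if anything, cleaner and gives explicit constants.
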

\begin{proof}
Let $B$ be the set of values of $\alpha_v$ for some (=any) $v \in V(T)$. Set $B$ is finite. There are two natuaral projections $\pr_1,\pr_2 : B \times B \to B$.
Denote $\eta$ the distribution of $\alpha_u()$ for any $u \in V(T)$. 
For each $u,v \in V(T)$ denote $\xi_{u,v}$ the distribution of $(\alpha_u, \alpha_v)$. Let $M'$ be the set of probability measures $\theta$ on $B \times B$ such that $\pr_1(\theta) = \pr_2(\theta) = \eta$. Note that $M'$ is a finite-dimensional convex set. We note that $\entr$ is a smooth function in a neighborhood of $\eta \otimes \eta$. We claim that there is a constant $C'$ such that 
\begin{equation*}
I(\alpha_u,\alpha_v) \geq C'\lVert \xi_{u,v} - \eta \otimes \eta\rVert^2, 
\end{equation*}
where $\lVert \; \rVert$ stands for the total variation distance. 
This follows from the fact that $\eta \otimes \eta$ is the unique maximal point of function $\xi \mapsto 2 \entr(\eta) - \entr(\xi)$ defined on $M'$, and that the second derrivative of this function is positive in any direction. Next we note that $\Cov(\alpha_u,\alpha_v)$ is an affine function of the distribution of $(\alpha_u,\alpha_v)$, so we get that there is a constant $C$ such that 
\begin{equation*}
\Cov(\alpha_u,\alpha_v) \leq C\big(I(\alpha_u,\alpha_v)\big)^{1/2}.
\end{equation*}
This implies the desired.
\end{proof}

In the paper \cite{BSV15}, a stronger decorrelation bound is obtained for factors of IID process. Various results of this flavour were obatined in \cite{BS14}, \cite{BGHV}, \cite{CHV}, \cite{GH17}. It seems that some of these could be transferred to the setting of factors of Gibbs processes. 

Our proof mostly follows the steps of \cite{BGH}.
We refer the reader to the forementioned paper for the details on how to derive Theorem \ref{th: decay of mutual info} and other entropy inequalities from the ``general edge-vertex inequality'', Theorem \ref{th: general edge-vertex}, presented in Section \ref{sec: edge-vertex inequality}. 
So it is enough to prove this inequality. Again, the basic argument structure is the same. The main difference lies in the proof of our Lemma \ref{lem: nice covering has nice coloring}, which corresponds to Lemma 5.4 of \cite{BGH}. In the factor of IID case of \cite{BGH}, a standard measure concentration inequality yuilded the desired right away. In our case, lack of independence in the base process forced us to opt for a second moment argument coupled with some analysis of unique Gibbs measures. Note that the argument takes a kind of a bootstrap form: a weak decorrelation property is used to prove a quantitative one. 

\section{Preliminaries}
For a metrizable compact space $X$, we denote $M(X)$ the space of Borel probability measures endowed with the weak* topology. For two random variables $X$ and $Y$, their covariance is denoted $\Cov(X,Y)$:
\begin{equation*}
\Cov(X,Y) = \expect XY - \expect X \,\expect Y = \expect\big((X - \expect X)(Y-\expect Y)\big).
\end{equation*}

The sign ``$\Subset$'' stands for ``a finite subset''.
For a graph $G$ we will denote $V(G)$ its vertex set and $E(G)$ its edge set. Denote $Aut(G)$ the automorphism group of graph $G$.  For any vertex $v$ and a positive integer $R$, denote $\Ball(v,R)$ the ball of radius $R$ centered at $v$.


Let $H$ be a locally-finite simple graph.
Let us fix a finite set $A$ (an {\em alphabet}).

Let $W$ be any subset of $V(H)$. Denote $\pr_W: A^{V(H)} \to A^W$ the natural projection map. Denote $\sB(W)$ the $\sigma$-subalgebra on $A ^{V(H)}$ that is the preimage of the Borel $\sigma$-algebra on $A^W$ under the projection map $\pr_W$

Fix a positive integer $D$ (a {\em memory parameter}). A {\em potential} is a collection $(\psi)_{v \in V(H)}$ of functions $A^{\Ball(v,D)} \to \R$. We will also say that $\psi$ is a {\em $D$-potential}, reflecting that its memory parameter is $D$. 
Abusing notation a little, we may assume that $\psi_v$ is defined on $A^W$ for any subset $W$ of $V(H)$  containing $\Ball(v,D)$ (including the whole space $A^{V(H)}$). For a positive integer $R$ and a subset $W$ of $V(H)$, we denote $\partial^R W$ the $R$-neighborhood of set $W$, that is the set of all the vertices outside of $W$ at a distance not bigger than $R$. The potential defines the set of {\em Gibbs measures}. If graph $H$ is finite, then the Gibbs measure $\nu$ is given by the formula:
\begin{equation*}
\nu(\lbrace \omega \rbrace) = \frac{\exp \big(- \sum_{v \in V(H)} \psi_v(\omega)\big)}{Z}, 
\end{equation*}
where $Z$ stands for the normalizing coefficient
\begin{equation*}
\sum_{\omega \in A^{V(H)}}\exp \Big(- \sum_{v \in V(H)} \psi_v(\omega)\Big).
\end{equation*}
For $s \in A^{\Lambda_1}$ and $t \in A^{\Lambda_2}$ with $\Lambda_1 \cap \Lambda_2  = \varnothing$, denote $s \sqcup t \in A^{\Lambda_1 \cup \Lambda_2 }$ their glueing.
If $H$ is infinite, then there could be multiple Gibbs measures. A Borel probability measure $\nu$ on $A^{V(H)}$ is a Gibbs measure for potential $(\psi)_{v\in V(H)}$ if for every $\Lambda \Subset V(H)$, for any $s \in A^\Lambda$ and $t \in A^{\partial^{2D} \Lambda}$, we have the following for the conditional probability: 
\begin{equation*}
\Prob(\pr_{\Lambda}(\omega) = s \vert \pr_{\partial^{2D}\Lambda}(\omega) = t ) = 
\frac{\exp\left( -\sum_{v \in \Lambda \cap \partial^D \Lambda} \psi_v(s \sqcup t)   \right)}
{Z_{\Lambda,t} },
\end{equation*}
where $\omega \in A^{V(H)} $ is distributed according to $\nu$, and $Z_{\Lambda,t}$ stands for the normalizing coefficient
\begin{equation*}
\sum_{s' \in A^{\Lambda}}\exp\left( -\sum_{v \in \Lambda \cap \partial^R \Lambda} \psi_v(s' \sqcup t)   \right).
\end{equation*}
We will say that pair $(H,\psi)$ forms a {\em Gibbs structure}.

\section{Coverings and the edge-vertex inequality for Gibbs measures}\label{sec: edge-vertex inequality}

Let $G$ be a finite simple connected graph. The universal covering $\cover$ of graph $G$ is the tree obtained in the following way. Let $v$ be any vertex from $G$. The vertex set of $\cover$ is the set of all non-backtracking paths on $G$ starting from $v$ (the trivial one included). Two paths are connected by an edge in $\cover$ iff one of these paths is another extended by one edge. We define the natural projection map $\varphi: V(\cover) \to V(G)$ which sends a vertex from $\cover$ to the end of the corresponding non-backtracking path.  Of course, this map could be extended to the set of edges $E(\cover)$ as well. Note that the construction described is unique up to an automorphism.

Consider the subroup $\Gamma_{\varphi}$ of $\Aut(\cover)$ consisting of all $\varphi$-preserving automorphisms of graph $\cover$:
\begin{equation*}
\Gamma_{\varphi} = \big\{ \gamma \in \Aut(\cover) \vert\; \varphi(\gamma v) = \varphi(v) \text{ for any } v \in V(\cover)  \big\}.
\end{equation*}
We will usually write $\Gamma$ instead of $\Gamma_{\varphi}$. 

Let $A$ be a finite alphabet. Note that $\Gamma$ acts on $A^{\cover}$:
\begin{equation*}
(\gamma \omega)(v) = \omega (\gamma^{-1} v),
\end{equation*}
for any $\gamma \in \Gamma$, $v \in V(\cover)$ and $\omega \in A^{V(\cover)}$.

Let $\psi = (\psi)_{v \in V(\cover)}$ be a $\Gamma$-invariant potential on $\cover$. That is, for any $v \in V(\cover)$, any $\gamma \in \Gamma$ and $\omega \in A^{V(\cover)}$ holds:

\begin{equation*}
\psi_{\gamma v} (\gamma \omega) = \psi_v (\omega).
\end{equation*}
If $\nu$ is the unique measure for $(\cover, \psi)$, then $\nu$ is automatically $\Gamma$-invariant; this follows from the fact that $\gamma \nu$ is a Gibbs measure for any $\gamma \in \Gamma$.

Let $\nu$ be a $\Gamma$-invariant measure on $A^{V(\cover)}$.
Let $B$ be another finite alphabet. A measurable map $\tau$ from $A^{V(\cover)}$ to $B^{V(\cover)}$ is  $\Gamma$-equivariant if the following holds:
\begin{equation*}
\tau(\gamma \omega) = \gamma (\tau (\omega)),
\end{equation*}
for any $\gamma \in \Gamma$ and $\omega \in A^{V(\cover)}$.

Let $\nu$ be the unique Gibbs measure for $(\cover, \psi)$, and $\tau$ be a $\Gamma$-equivariant measurable map $A^{V(\cover)} \to B^{V(\cover)}$. Let $\mu$ be $\tau(\nu)$. We note that for a fixed edge $e \in E(G)$ and for any two edges $e',e'' \in E(\cover)$ such that $\phi(e') = \phi(e'') = e$, the marginal distributions $\pr_{e'}(\mu)$ and $\pr_{e''}(\mu)$ on $B \times B$ are the same(up to the natural re-labeling). So it makes sense to define the distribution $\mu_e$ on $B^{\lbrace u,v \rbrace} = B \times B$, where $e = \lbrace u,v \rbrace$.

Remind that the Shannon entropy of a measure $\eta$ on a finite set $Q$ is defined as 
\begin{equation*}
\entr(\eta) = -\sum_{q \in Q} \eta(\lbrace q\rbrace)\log \eta(\lbrace q\rbrace), 
\end{equation*}
with the convention $0 \log 0 = 0$. The Shannon entropy of a random variable is defined as the Shannon entropy of its distribution.

The main theorem of this paper is the following:
\begin{theor}\label{th: general edge-vertex}
In the notation above, the following inequality holds:

\begin{equation*}
\sum_{v \in V(G)}(\deg(v) - 1) \entr(\mu_v) \leq \sum_{e \in E(G)}\entr(\mu_e).
\end{equation*}

\end{theor}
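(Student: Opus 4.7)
The plan follows the strategy of \cite{BGH}: approximate the universal cover $\cover$ by large random $N$-lifts $\WGN$ of $G$, transfer the desired inequality to a finite-graph entropy estimate on $\WGN$, and pass to the limit $N \to \infty$. The only substantive new input needed is a quantitative statement that the finite-volume Gibbs measures on $\WGN$ approach the unique Gibbs measure $\nu$ on the tree; once this is in hand, the rest of the argument transfers essentially verbatim from the factor-of-IID case.

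First, I choose a uniformly random $N$-lift $\WGN$ of $G$. With probability tending to $1$ as $N \to \infty$, every ball of radius $R$ in $\WGN$ is a tree isomorphic to the corresponding ball in $\cover$. By a routine measurable approximation, I replace the factor $\tau$ by a range-$R$ block factor $\tau_R$ for which $\tau_R(\nu)$ differs from $\mu = \tau(\nu)$ by at most $\epsilon$ in total variation on any fixed bounded subset. The $\Gamma$-invariant potential $\psi$ descends along the covering map to a potential on $\WGN$; let $\nu_N$ be the corresponding finite Gibbs measure and set $\mu_N = \tau_R(\nu_N)$, which is well-defined at every vertex because $R$ is smaller than the typical girth.

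The heart of the argument is Lemma \ref{lem: nice covering has nice coloring}, the analogue of Lemma~5.4 of \cite{BGH}: with high probability over the random lift, the marginals of $\nu_N$ on any fixed bounded subset of $\WGN$ are close in total variation to the corresponding marginals of $\nu$. In the IID setting of \cite{BGH} this is immediate from concentration for independent random variables. Without independence, I would use a second moment argument: for a bounded local statistic on $\WGN$, its variance under $\nu_N$ can be bounded by a sum of covariances of local observables at pairs of points, and these covariances are in turn controlled by the decay of correlations of $\nu$. Uniqueness of the Gibbs measure first yields a qualitative decorrelation (covariances between bounded local observables supported far apart tend to zero), which a bootstrap converts into the quantitative variance bound needed to conclude. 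This is the main obstacle and the only place where the proof substantially departs from \cite{BGH}.

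Given the lemma, the vertex and edge marginals of $\mu_N$ on each of the $N$ preimages of a vertex or edge of $G$ converge to the corresponding marginals $\mu_v$, $\mu_e$. The finite-graph edge-vertex inequality on $\WGN$ is then obtained, as in \cite{BGH}, by combining a chain-rule decomposition of $\entr(\mu_N)$ along a spanning tree of $\WGN$ with the non-negativity of Shannon entropy; the locally tree-like structure of $\WGN$ and the locality of the factor $\tau_R$ ensure that the discrepancy from the pure tree formula is absorbed into an $o(N)$ error term. Dividing by $N$, letting $N \to \infty$ and then $R \to \infty$, $\epsilon \to 0$, yields the claimed inequality
\begin{equation*}
\sum_{v \in V(G)}(\deg(v)-1)\entr(\mu_v) \leq \sum_{e \in E(G)}\entr(\mu_e).
\end{equation*}
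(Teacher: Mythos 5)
Your setup (random $N$-lifts, replacing $\tau$ by a block/cellular factor, transferring the potential to $\WGN$, and handling the lack of independence by a second-moment argument whose covariance input comes from uniqueness of the Gibbs measure) matches the paper's strategy, and your identification of that covariance/bootstrap step as the genuinely new difficulty is exactly right (it is the content of Lemmas \ref{lem: covariance bound}--\ref{lem: mean value} and \ref{lem: nice covering has nice coloring}). Two corrections, the second of which is a real gap. First, the key lemma is not that the bounded-set marginals of $\nu_N$ converge to those of $\nu$; it is that, with high probability over the lift, there \emph{exists a single coloring} $c:V(\WGN)\to B$ whose fiber-averaged empirical edge statistics $\mu_e^c$ are within $\varepsilon$ of $\mu_e$ for all $e\in E(G)$ (Lemma \ref{lem: nice covering has nice coloring}, hence Lemma \ref{lem: existence of good colorings}). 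This requires both that the expectation of $\mu_e^c$ under a $\mu^{\WGN}$-random $c$ is close to $\mu_e$ (via stability of the unique Gibbs measure, Lemma \ref{lem: stability of unique}) and that the empirical averages concentrate (the variance bound, Lemma \ref{lem: mean value}); local marginal closeness alone is not the statement that gets used.

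The genuine gap is your concluding step. Neither this paper nor \cite{BGH} finishes by a ``chain-rule decomposition of $\entr(\mu_N)$ along a spanning tree plus non-negativity of entropy,'' and as stated that step does not yield the claim. A chain rule along a spanning tree gives an \emph{upper} bound on $\entr(\mu_N)$ in terms of tree-edge marginals, while non-negativity gives $\entr(\mu_N)\ge 0$; combining these does not produce a lower bound on $\sum_{e\in E(G)}\entr(\mu_e)-\sum_{v\in V(G)}(\deg(v)-1)\entr(\mu_v)$. Indeed no such bound can come from generic entropy identities on $\WGN$: the functional $\sum_e \entr(\lambda_e)-\sum_v(\deg(v)-1)\entr(\lambda_v)$ is \emph{not} nonnegative for arbitrary measures $\lambda$ on graphs with vertices of degree at least $3$ (e.g.\ four identical uniform bits on $K_4$ give $6\log 2-8\log 2<0$), so the locally tree-like structure and the specific origin of the colorings must enter quantitatively. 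The missing ingredient is the first-moment counting estimate (the unnumbered lemma here, the analogue of Lemma 5.6 of \cite{BGH}): the \emph{expected number} of colorings of the random lift with $\lVert\mu_e^c-\mu_e\rVert\le\varepsilon$ for all $e$ equals
\begin{equation*}
\exp\Big(N\Big(\sum_{e\in E(G)}\entr(\mu_e)-\sum_{v\in V(G)}(\deg(v)-1)\entr(\mu_v)+o_{\varepsilon,N}(1)\Big)\Big).
\end{equation*}
Since by Lemma \ref{lem: existence of good colorings} such a coloring exists with probability $1-o(1)$, this expectation is at least $1-o(1)$, which forces the exponent to be nonnegative in the limit and proves Theorem \ref{th: general edge-vertex}. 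Your proof needs this counting step (or an equivalent, e.g.\ an f-invariant--type argument) in place of the spanning-tree decomposition.
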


The proof will ensue in the next section as a consequence of two lemmata concerning colorings of finite random coverings. Basically, the first one shows that the difference between the right-hand and the left-hand sides of the edge-vertex inequality is involved in the estimate of the expected number of some ``good'' colorings of random covers of graph $G$. On the other hand, the second one shows that this expected number is at least $1-o(1)$ (a ``good'' coloring exist almost surely).

Note that Theorem \ref{th: general edge-vertex} is exactly the ``general edge-vertex inequality'' from \cite{BGH}, the only change is that we consider factors of uniqie Gibbs measure processes instead of factors of IID's.  
IID processes form a particular case of unique Gibbs measure processes: it is easy to construct a potential whose unique Gibbs measure would be any given product-measure (the ``interactionless potential'').
Theorem \ref{th: decay of mutual info} and other entropy inequalities follow by constructing specially tailored factors, see \cite[Section 3]{BGH} for details. These constructions work in our case without any noticeable changes. 

A {\em $\Gamma$-cellular map} is a $\Gamma$-equivariant {\em continuous} map $\tau$ from $A^{V(\cover)}$ to $B^{V(\cover)}$. The continuity assumption may sound a bit fancy. In fact, it is equivalent to say that for any $v \in V(\cover)$ there is a finite set $W$ of $V(\cover)$ such that $(\tau(\omega))(v)$ depends only on $\pr_W(\omega)$. This equivalence is known as the Curtis-Hedlund-Lyndon theorem. 
There is a well-known trick to approximate a measurable equivariant map by a cellular map. Namely, if $\tau'$ is a measurable equivariant map, then  for any $\varepsilon>0$ there is a cellular map $\tau$ such that for any $v \in V(\cover)$ holds 
\begin{equation*}
\nu\Big(\big\{ \omega \in A^{V(\cover)} :  (\tau(\omega))(v) = (\tau'(\omega))(v)  \big\}\Big) > 1 - \varepsilon.
\end{equation*}
So it is enough to consider only factors obtained by cellular maps to prove Theorem \ref{th: general edge-vertex}.

\section{Random coverings and proof of Theorem \ref{th: general edge-vertex}} 

Let $G$ be a finite simple connected graph. Let $\cover$ be its universal covering together with a natural covering map $\varphi: \cover \to G$.
An $N$-fold covering of $G$ is a graph $\WGN$ whose vertex set is $\lbrace 1,\ldots,N \rbrace \times V(G)$ together with a graph morphism  $q_N: \WGN \to G$ such that 
\begin{enumerate}
\item for each $v \in V(G)$ and $i$ from $\lbrace 1, \ldots, N\rbrace$ holds $q_N(i, v) = v$;
\item there are exactly $N$ preimages of each edge from $G$;
\item  for each edge $\lbrace u,v\rbrace$, the preimage $q_N^{-1}(\lbrace u,v \rbrace)$ is a perfect matching between the sets $q_N^{-1}(u)$ and $q_N^{-1}(v)$;
\item if vertices $u'$ and $v'$  from $\WGN$ are connected, then $q_N(u')$ and $q_N(v')$ are connected.
\end{enumerate}
We note that, for each connected component of graph $\WGN$, there is a covering map $q: \cover \to \WGN$ onto this component such that $\varphi = q_N \circ q $

A uniform random $N$-fold covering of $G$ is an $N$-fold covering such that the perfect matchings $q_N^{-1}(e)$ for each $e \in E(G)$ are taken uniformly at random.
An important property of the random $N$-fold covering is that for big $N$ it looks like the universal covering in a neighborhood of almost every of its vertices. 
To be more precise, consider an $N$-fold covering $\WGN$. Take a vertex $u$ from $V(\WGN)$. Let $q: \cover \to \WGN$ be a covering map to the connected component of $\WGN$ containing $u$. We will say that vertex $u$ is $R$-nice, for a positive integer $R$, if $q$ is a bijection from $\Ball(u',R)$ to $\Ball(u,R)$, where $u'$ is any preimage of $u$ under map $q$. 
We note that the latter definition is independent of the choice of covering map $q$. We will say that an edge is $R$-nice if both of its end are $R$-nice. For each vertex $v$ in graph $G$, denote $L_v^{\WGN}$ the preimage set of this vertex under the natural covering map to $G$. We establish similar notation $L_e^{\WGN}$ for an edge $e \in E(G)$.

For a positive $\varepsilon$ and positive integer $R$, we will say that an $N$-fold covering $\WGN$ of graph $G$ is $(R,\varepsilon)$-nice if for each vertex $v \in V(G)$  the portion of $R$-nice vertices in $L_v^{\WGN}$ is bigger than $1-\varepsilon$, and the same holds for each edge.
The following lemma is discussed in \cite[Section 5.a]{BGH}

\begin{lem}\label{lem: random covering is nice}
Let $R$ be a positive integer and let $\varepsilon>0$. Then for any $\varepsilon>0$ there is $N'$ such that for every $N>N'$ the random uniform $N$-fold covering $\WGN$ is $(R,\varepsilon)$-nice with probability at least $1-\varepsilon$.
\end{lem}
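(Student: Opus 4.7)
The plan is a standard first moment argument: show that the expected fraction of non-$R$-nice vertices (resp.\ edges) in each fiber is $O(1/N)$, then conclude via Markov's inequality and a union bound over the finite set $V(G) \cup E(G)$. The starting observation is that a vertex $u \in V(\WGN)$ fails to be $R$-nice iff there is a non-backtracking closed walk of length at most $2R$ through $u$ in $\WGN$; projecting such a walk by $q_N$ produces a non-backtracking closed walk in $G$ of length at most $2R$ based at $v = q_N(u)$. Since $G$ is finite, the number of such walks in $G$ starting at any given vertex is bounded by a constant $K = K(G,R)$, so it suffices to fix one such walk $w = (v_0, v_1, \ldots, v_l = v_0)$ with edges $e_0, \ldots, e_{l-1}$ and bound the probability that $w$ lifts to a closed walk based at a given $(i, v_0) \in L_{v_0}^{\WGN}$.

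The lift closes up iff the composition $M_{e_{l-1}} \circ \cdots \circ M_{e_0}$ fixes $i$, where $M_{e_j}$ denotes the uniform random matching associated with $e_j$. When all edges of $w$ are distinct, these matchings are independent and uniform, so the composition is a uniform random permutation and fixes $i$ with probability exactly $1/N$. In the general case, I would reveal the matchings one step at a time along $w$: at each step the next index is obtained by a matching value that, conditional on the previously revealed values, is uniform over a set of size at least $N - l$, so the chance of hitting any prescribed target is at most $1/(N - l)$. A short bookkeeping argument shows that closing the walk at $(i, v_0)$ forces at least one such conditional event to land on a prescribed target, giving a bound of $C'/N$ for some $C'$ depending only on $l \leq 2R$. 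Summing over the $K$ candidate walks yields $\Prob[(i, v) \text{ is not } R\text{-nice}] \leq C/N$ with $C = C(G, R)$.

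By linearity of expectation, the expected number of non-$R$-nice vertices in $L_v^{\WGN}$ is at most $C$, so the expected fraction is at most $C/N$. Markov's inequality gives that this fraction exceeds $\varepsilon$ with probability at most $C/(\varepsilon N)$, and the argument for edges is essentially identical. A union bound over $V(G) \cup E(G)$ (a finite set depending only on $G$) then gives the desired $(R, \varepsilon)$-niceness with probability at least $1 - \varepsilon$ once $N \geq N' = N'(G, R, \varepsilon)$. The only technical point is the repeated-edge estimate for $\Prob[(i,v) \text{ not } R\text{-nice}]$: it is not deep, but requires tracking which matching values remain ``free'' at each step of the revealing procedure. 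Since $l \leq 2R$ is bounded and $G$ is fixed, this amounts to a bounded combinatorial case analysis rather than a genuine obstacle, and indeed the result is essentially folklore (see \cite[Section 5.a]{BGH}).
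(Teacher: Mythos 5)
Your argument is correct and is essentially the paper's own proof: the paper's sketch also fixes a non-backtracking path in $G$, lifts it at a fiber vertex, and uses the ``lazy evaluation'' (sequential revealing of the matchings) trick to show the lift has no recurring vertices with probability $1-O(1/N)$, with the first-moment/Markov/union-bound bookkeeping left implicit; you simply spell that bookkeeping out. One small imprecision: a vertex failing to be $R$-nice yields an internally non-backtracking closed walk of length at most $2R$ based at some vertex within distance $R$ of $u$ (possibly backtracking at the base point and not necessarily passing through $u$), so the union should also run over the boundedly many possible base points, which only changes the constant $C(G,R)$.
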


\begin{proof}[Sketch of proof]
Let $v$ be a vertex from graph $G$ and $p$ be a finite non-backtracking path on $G$ starting at $v$. Let $u \in V(\WGN)$ be any vertex with $v = q_N(u)$, and let $\hat{p}$ be the lift of path $p$ to the path on $\WGN$ starting at $u$, and whose projection under $q_N$ is $p$. It is sufficient to prove that, with probability close to $1$, $\hat{p}$ has no recurring vertices provided that $N$ is big enough. This can be done easily by induction and using the ``lazy evaluation'' trick. Namely, instead of constructing at once the random perfect matchings involved in the definition of random covering, one should only pick randomly the edge needed to continue lifting the path considered. 
\end{proof}

Let $\WGN$ be an $N$-fold covering of $G$, let $B$ be a finite alphabet. Let $c$ be a map from $V(\WGN)$ to $B$. We associate with it the ``empirical distribution''. Namely, for each vertex $v$ we pick uniformly its preimage $v'$ under the covering map $q_N$. The distribution of $c(v')$, a measure on $B$, is our empirical distribution and will be denoted $\mu_v^c$. In other words, 
\begin{equation*}
\mu_v^c = \frac{1}{N}\sum_{v' \in L_v^{\WGN}} \delta_{c(v')}.
\end{equation*}
In the same way, for each edge $e = \lbrace u, v\rbrace$, we pick uniformly at random its preimage $e' = \lbrace u',v'\rbrace$ ($q_N(u')= u$ and $q_N(v')= v$) under the covering map. The distribution of $(c(u'),c(v'))$ is a measure on $B^e = B \times B$ and will be denoted $\mu_e^c$. We may also write:
\begin{equation*}
\mu_e^c = \frac{1}{N}\sum_{\lbrace u, v\rbrace \in L_e^{\WGN}} \delta_{(c(u'), c(v'))}.
\end{equation*}


In the next lemma we assume for a moment that $(\mu_v)_{v \in V(G)}$ and $(\mu_e)_{e \in E(G)}$ are just two collections of measures on $B$ and $B \times B$ respectively. We will say that these collections are consistent if $\pr_{v}(\mu_e) = \mu_v$ for every $e \in E(G)$ and $v$ is one of two vertices connected by edge $e$.
\begin{lem}
Let $(\mu_v)_{v \in V(G)}$ and $(\mu_e)_{e \in E(G)}$ be a consistent collection of measures. For every $\varepsilon>0$ the following holds:
\begin{multline}
\expect_{\WGN}\Big| c: V(\WGN) \to B, \lVert \mu_e^c - \mu_e\rVert \leq \varepsilon \text{ for every } e \in E(G) \Big|  \\
=\exp \bigg( N \bigg( \sum_{e \in E(G)} \entr(\mu_e) - \sum_{v \in V(G)} (\deg(v) -  1)\entr(\mu_v) + o_{ \varepsilon,N}(1)\bigg)\bigg),
\end{multline}
where o-small tends to zero as $N$ tends to infinity and $\varepsilon$ tends to zero.
\end{lem}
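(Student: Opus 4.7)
The plan is a classical types calculation on random coverings. A coloring $c : V(\WGN) \to B$ is determined by its $|V(G)|$ restrictions $c|_{L_v^{\WGN}}$, each an arbitrary labelling by $B$ of a set of size $N$; the edge empirical distribution $\mu_e^c$ for $e=\{u,v\}$ is then obtained by pairing the two fibre labellings via the uniform random matching $q_N^{-1}(e)$, and these matchings are independent across $e\in E(G)$. I will compute the expectation by first conditioning on the vertex \emph{types} $\tilde{\mu}_v = \mu_v^c \in M(B)$ (which are forced to lie within $\varepsilon$ of $\mu_v$ since $\pr_v \mu_e = \mu_v$), then on the joint edge types $\tilde{\mu}_e \in M(B\times B)$ realised by the matchings.

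The two inputs are standard Stirling estimates. First, the number of colorings realising a prescribed rational vertex-type profile $(\tilde{\mu}_v)_v$ (values in $\tfrac{1}{N}\Z$) equals $\prod_v \binom{N}{(N\tilde{\mu}_v(b))_{b\in B}} = \exp\!\bigl(N\sum_v \entr(\tilde{\mu}_v) + O(\log N)\bigr)$. Second, for a single edge $e=\{u,v\}$ with colorings of types $\tilde{\mu}_u,\tilde{\mu}_v$ fixed, a direct count of matchings realising a joint type $\tilde{\mu}_e$ (with the correct marginals) divided by $N!$ gives
\begin{equation*}
\Prob\!\bigl(\mu_e^c = \tilde{\mu}_e\bigr) = \frac{1}{N!}\prod_{b_1}\binom{N\tilde{\mu}_u(b_1)}{(N\tilde{\mu}_e(b_1,b_2))_{b_2}} \prod_{b_2}\binom{N\tilde{\mu}_v(b_2)}{(N\tilde{\mu}_e(b_1,b_2))_{b_1}} \prod_{b_1,b_2}(N\tilde{\mu}_e(b_1,b_2))!,
\end{equation*}
which after Stirling simplifies to $\exp\bigl(-N\,I(\tilde{\mu}_e) + O(\log N)\bigr)$, where $I(\tilde{\mu}_e) := \entr(\tilde{\mu}_u)+\entr(\tilde{\mu}_v)-\entr(\tilde{\mu}_e)$. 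Multiplying across $v$ and $e$ and using independence of the matchings together with the identity $\sum_e I(\tilde{\mu}_e) = \sum_v \deg(v)\,\entr(\tilde{\mu}_v) - \sum_e \entr(\tilde{\mu}_e)$ yields, for each consistent rational profile, the contribution $\exp\bigl(N\bigl[\sum_e \entr(\tilde{\mu}_e) - \sum_v (\deg(v)-1)\entr(\tilde{\mu}_v)\bigr] + O(\log N)\bigr)$.

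Summing over consistent profiles with $\lVert \tilde{\mu}_e - \mu_e\rVert \leq \varepsilon$ produces the upper bound (there are only polynomially many profiles in $N$, and uniform continuity of $\entr$ on the compact simplices absorbs the $\varepsilon$-slack into $o_\varepsilon(1)$); the matching lower bound follows from a single profile obtained by rationally approximating each $\mu_e$ with denominator $N$ within total variation $O(1/N)$. The main obstacle I foresee is a bookkeeping one: one must exhibit, inside the $\varepsilon$-ball around $(\mu_e)_e$, a joint-type profile that is actually realisable at level $N$---rational with denominator $N$ and with marginals $\pr_u \tilde{\mu}_e$, $\pr_v \tilde{\mu}_e$ that agree across all edges meeting a common vertex. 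This is a finite-dimensional rational approximation, but it requires care because an arbitrary coordinatewise rounding of $(\mu_e)_e$ will break the vertex-consistency; the standard fix is to first round the vertex marginals $\mu_v$ to rationals $\tilde{\mu}_v$ of denominator $N$ and then solve, for each edge, a small transportation problem to produce $\tilde{\mu}_e$ with the prescribed marginals and within $O(1/N)$ of $\mu_e$. All the resulting errors are $o_{\varepsilon,N}(1)$ in the exponent.
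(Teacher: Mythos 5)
Your proposal is correct and is essentially the argument the paper relies on: the paper omits the proof and cites Lemma 5.6 of \cite{BGH}, whose content is exactly this types computation (counting colorings by vertex-type multinomials, computing the probability that an independent uniform matching realises a given edge type, applying Stirling, and using $\sum_e\big(\entr(\tilde\mu_u)+\entr(\tilde\mu_v)-\entr(\tilde\mu_e)\big)=\sum_v \deg(v)\entr(\tilde\mu_v)-\sum_e\entr(\tilde\mu_e)$, with polynomially many profiles absorbed into the exponent). Your treatment of the lower bound via rounding to a denominator-$N$ profile with consistent marginals (a small transportation/rounding step) is precisely the realisability point that needs care, and it is handled correctly.
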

We omit the proof and refer the reader to \cite[Lemma 5.6]{BGH}. Note that the statement therein is slightly different, the precise equality is required. Nonetheless, the careful inspection of the proof reveals that it works in our case as well. 

Theorem \ref{th: general edge-vertex} will follow from the previous lemma if only we can prove that the expectation mentioned in that lemma is bigger than $1 - o(1)$. This is exactly what we get in the next one. 
\begin{lem}\label{lem: existence of good colorings}
For every $\varepsilon>0$ there is $N'$ such that for every $N>N'$, a random $N$-fold covering $\WGN$ with probability more than $1 - \varepsilon$ has a coloring $c: V(\WGN) \to B$ such that $\lVert \mu^c_e - \mu_e  \rVert \leq \varepsilon$ for every $e \in E(G)$.
\end{lem}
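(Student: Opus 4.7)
The plan is to construct $c$ by sampling from the finite-volume Gibbs measure that $\psi$ induces on $\WGN$ and then applying $\tau$ locally on the nice part. By Curtis-Hedlund-Lyndon we may take $\tau$ cellular with dependence radius $R_0$, at least as large as the memory parameter of $\psi$. Fix $R\gg R_0$ and a small $\varepsilon'>0$ to be tuned. By Lemma~\ref{lem: random covering is nice}, $\WGN$ is $(R,\varepsilon')$-nice with probability exceeding $1-\varepsilon'$ for all sufficiently large $N$. On such a cover, transport $\psi$ through the local isomorphisms available at $R$-nice vertices (which is unambiguous by $\Gamma$-invariance) and extend it by any bounded convention on the non-nice part to obtain a potential $\psi_N$; let $\nu_N$ be the resulting finite-volume Gibbs measure on $A^{V(\WGN)}$. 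Sample $\omega\sim\nu_N$ and set $c(v):=\tau(\omega)(v)$ on the $R_0$-nice part of $V(\WGN)$, filling in arbitrarily elsewhere. By Markov applied to the joint probability space, the lemma will follow from $\Prob[\lVert\mu_e^c-\mu_e\rVert>\varepsilon]\to 0$ for each $e\in E(G)$, which in turn reduces by Chebyshev to controlling $\expect\mu_e^c$ and $\Var\mu_e^c(b)$ for $b\in B^e$.

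For the mean, any $R$-nice lift $e'\in L_e^{\WGN}$ lies in a ball graph-isomorphic to a ball in $\cover$. Under this identification, the marginal of $\nu_N$ on $\Ball(e',R)$ converges weakly to the marginal of $\nu$ on the corresponding ball of $\cover$: any subsequential weak* limit of these embedded marginals is a Gibbs measure on $\cover$ and hence equals the unique $\nu$ (cf.~\cite{A15}). Pushing forward through $\tau$ converts this into convergence of the per-lift color distribution to $\mu_e$. The $\varepsilon'$-fraction of non-nice lifts contributes at most $\varepsilon'$ in total variation, so choosing $R$ large and $\varepsilon'$ small gives $\lVert\expect\mu_e^c-\mu_e\rVert<\varepsilon/2$ uniformly in $N$.

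For the variance, expanding yields
\[
\Var(\mu_e^c(b))=\frac{1}{N^2}\sum_{i,j=1}^N \Cov\!\left(\mathbf{1}\{\text{lift }i\text{ colored }b\},\;\mathbf{1}\{\text{lift }j\text{ colored }b\}\right),
\]
and pairs of lifts within graph-distance $D$ contribute at most $O(D/N)$. For pairs at distance greater than $D$, both lifts are with high probability $R_0$-nice and the disjoint union of their $R_0$-balls embeds isometrically in $\cover$; local convergence $\nu_N\to\nu$ then extends to this two-ball region, so the joint covariance tends as $N\to\infty$ to the covariance computed in $\nu$ at two $\cover$-edges separated by $D$. As $D\to\infty$ this last quantity vanishes, since $\nu$ is the unique and hence extremal Gibbs measure, so the $\sigma$-algebra at infinity is $\nu$-trivial and two events on finite regions become asymptotically independent as the regions are pushed apart. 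Choosing $D=D_N\to\infty$ slowly enough that local convergence kicks in, Chebyshev delivers the required concentration.

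The delicate point is the decorrelation step: uniqueness of $\nu$ gives only qualitative asymptotic independence on $\cover$, and one must interchange the limits ``pair distance $\to\infty$'' and ``$N\to\infty$''. The proposed diagonal argument handles this by first fixing $D$, using local convergence of $\nu_N$ to $\nu$ on the finite two-ball region to compare the $\nu_N$-covariance with its $\nu$-counterpart, and only afterwards letting $D$ (and with it $N$) tend to infinity. A secondary technicality is absorbing the error from the arbitrary extension of $\psi_N$ over the non-nice set, but since that set has relative density $\varepsilon'$ its contribution to every moment estimate is $O(\varepsilon')$ and disappears in the final tuning.
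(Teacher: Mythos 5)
Your overall architecture matches the paper's: sample a coloring from the Gibbs measure transported to $\WGN$, push it through the (cellular) factor map, control the mean of $\mu_e^c$ using $R$-niceness plus stability of the unique Gibbs measure, and get concentration by a second-moment/Chebyshev argument. The mean estimate and the bookkeeping for non-nice lifts are essentially the paper's argument (your subsequential-limit claim is, in substance, the paper's stability lemma, Lemma~\ref{lem: stability of unique}).

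The gap is in the decorrelation step for distant pairs of lifts. You assert that for two lifts at distance greater than $D$, ``local convergence $\nu_N\to\nu$ extends to this two-ball region,'' so the covariance under $\nu_N$ converges to the covariance under $\nu$ at two tree edges at separation $D$. Local (ball-wise) comparison with $\nu$ only controls marginals on regions where the cover is isomorphic to the tree, i.e.\ on balls of radius $R$ around nice vertices with $R$ fixed (or growing very slowly). The joint law on two balls that are far apart is a global quantity: to control it by tree-likeness you would need a connected tree-like region containing both balls, i.e.\ niceness at radius comparable to the pair distance. But in the variance sum the typical pair of lifts is at distance comparable to the diameter of $\WGN$ (of order $\log N$), where no such region exists (a ball of that radius is essentially the whole cover, which has cycles). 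So the convergence you invoke is unjustified exactly for the pairs that dominate the sum, and letting $D=D_N\to\infty$ slowly does not help, since the problematic pairs are those at distance far beyond any niceness radius. A priori, without further argument, the finite-cover Gibbs measure could have long-range correlations between distant lifts even though every fixed-radius ball looks like the tree; ruling this out is precisely the content of the paper's Lemma~\ref{lem: covariance bound}. That lemma gets a bound uniform in $N$, in the choice of Gibbs measure on the modified structure, and in the location of the second observable, by conditioning on the $\sigma$-algebra of the complement of a ball around \emph{one} lift, using the Markov property to pass to the boundary $\sigma$-algebra, transferring to the tree via stability and continuity of the conditional expectation, and finishing with martingale convergence/tail triviality of the unique Gibbs measure. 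It requires niceness only around one of the two lifts, which is what makes the second-moment argument close. Your proof needs this (or an equivalent conditional-expectation argument) in place of the two-ball local-convergence claim.
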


We will prove Lemma \ref{lem: existence of good colorings} in Section \ref{sec: nice coverings good colorings} after laying below the necessary groundwork on Gibbs measures.

\section{Asymptotic independence for unique Gibbs measures}

Distant parts of processes governed by unique Gibbs measures are almost independent. It turns out, that the latter is true in some sense even if we warp the Gibbs sructure outside of some (big enough) ball in the graph. the purpose of this section is to explain these points.

We refer the reader to the monographs \cite{G11} and \cite{RS15} for the preliminaries on Gibbs measures.

Gibbs measures satisfy the so-called {\em Markov property} (see \cite[p. 157]{G11}). Namely, let $\nu$ be a Gibbs measure for the potential $\psi$.
For any finite subset $\Lambda$ of $V(H)$ and for any $\sB(\Lambda)$-measurable  $\eltwo$-function $f$, holds
\begin{equation*}
\expect(f \vert \sB(V(H) \setminus \Lambda) ) = \expect(f \vert \sB(\partial^{2D} \Lambda)).
\end{equation*} 

\begin{rem}\label{rem: continuity of conditioning}
Consider the following expression:
\begin{equation*}\label{eq: conditional expectation approximation}
\lVert \expect(f \vert \sB(\partial^{2D} \Lambda)) - \expect(f)\rVert_2.
\end{equation*}
There is an explicit parameter: the measure $\nu$. In fact, only the projection $\nu_0= \pr_{\Lambda \cup \partial^{2D}\Lambda} \nu$ matters in this respect. We may notice also that $f$ is well-defined on $A^{\Lambda \cup \partial^{2D}\Lambda}$ and that $\nu'_0$ is positive on all elements of $A^{\Lambda \cup \partial^{2D}\Lambda}$. The latter means that the function
\begin{equation*}
\nu_1 \mapsto \lVert \expect_{\nu_1}(f \vert \sB(\partial^{2D} \Lambda)) - \expect(f)\rVert_2
\end{equation*}
is continuous in a neighborhood of $\nu_0$.
\end{rem}

Let $H$ be a locally-finite connected graph with a $D$-potential $\psi$. Let $H'$ be another locally-finite graph with a $D$-potential $\psi'$. Let $v$ be a vertex in $H$, $v'$ be a vertex in $H'$ and $R>D$ be an integer. A $(v,v',R)$-partial isomorphism is an isomorphism between induced subgraphs $t: \Ball(v,R) \to \Ball(v',R)$  such that $t(v)=v'$, and 
\[
\psi'_{t(u)}(t \omega) = \psi_u(\omega),
\] 
for every $u \in \Ball(v,R-D)$ and $\omega \in A^{\Ball(v,R)}$(note that $t$ induces a map $A^{\Ball(v,R)} \to A^{\Ball(v',R)}$). Roughly speaking, this means that the Gibbs structures are the same in the neighborhoods of $v$ and $v'$.

Suppose that the Gibbs measure is unique for potential $\psi$ on $H$ and $v$ is a vertex from $V(H)$. Let $f: A^{V(H)} \to \R$ be a bounded measurable function. Then 
\begin{equation}\label{eq: martingale convergence}
\lim_{R \to \infty }\expect\Big({f \vert \sB\big(V(H) \setminus \Ball(v,  R)\big)}\Big) = \expect (f),
\end{equation} 
where the limit is taken in the $\eltwo$-norm. 
The latter is a consequence of the martingale convergence theorem together with the fact that the tail subalgebra 
$\mathcal{T} = \bigcap_{R>0}\sB(V(H) \setminus \Ball(v,  R))$ is trivial on the unique Gibbs measure (\cite[Theorem 7.7 (a), p. 118]{G11}).

A unique Gibbs measure is stable in a certain sense. To be more precise, 
\begin{lem}\label{lem: stability of unique}
Let $\nu$ be the unique Gibbs measure for Gibbs structure $(H,\psi)$, let $\Lambda$ be a finite subset of $V(H)$ and $v \in \Lambda$. Then for every $\varepsilon>0$ there is such an integer $R$ that if the Gibbs structure is changed only outside of $\Ball(v,R)$, then any Gibbs measure $\nu'$ (it is not necessarily unique anymore) for the new Gibbs structure satisfies 
\begin{equation*}
\lVert \pr_{\Lambda}(\nu) - \pr_{\Lambda}(\nu') \rVert \leq \varepsilon.
\end{equation*} 
\end{lem}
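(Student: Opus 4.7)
The plan is to use the DLR equations to express both marginals $\pr_\Lambda(\nu)$ and $\pr_\Lambda(\nu')$ as averages, over boundary conditions, of the \emph{same} finite-volume Gibbs specification supported on a large ball around $v$; then the difference between the two marginals reduces to how sensitive that specification is to the boundary condition.

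First, given $\varepsilon>0$, I would choose $R$ large enough that $\Lambda \cup \partial^{2D}\Lambda \subset \Ball(v, R-2D)$, and such that the potential agrees on $\Ball(v,R)$ for $(H,\psi)$ and $(H',\psi')$, including the $\partial^D$-neighborhood needed for the DLR formula on $M := \Ball(v, R-2D)$. Then by the Gibbs (DLR) property, for any Gibbs measure $\lambda$ on the ambient structure (either $\nu$ or $\nu'$), and any $s \in A^\Lambda$,
\begin{equation*}
\pr_\Lambda(\lambda)(s) \;=\; \int_{A^{\partial^{2D} M}} \nu^t_M(\pr_\Lambda = s)\, d\bigl(\pr_{\partial^{2D} M}\lambda\bigr)(t),
\end{equation*}
where $\nu^t_M$ is the finite-volume Gibbs measure on $A^M$ with boundary condition $t$ and potentials $\psi_u$, $u \in M \cup \partial^D M$. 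Crucially, $\nu^t_M$ does not see the modification, so it is the \emph{same} kernel in both integrals.

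Second, I would prove the uniform claim: for every $\delta>0$ there exists $R$ such that $|\nu^t_M(\pr_\Lambda = s) - \pr_\Lambda(\nu)(s)| \leq \delta$ for every $t$ and every $s$. The $\eltwo$ convergence furnished by equation (\ref{eq: martingale convergence}) is insufficient since $\nu'$ can concentrate on $\nu$-null boundary events. Instead I would argue by contradiction and compactness in the spirit of \cite[Thm.\ 7.7]{G11}: if the claim failed, pick $R_n \to \infty$, boundary conditions $t_n$, and $s$ with $|\nu^{t_n}_{M_n}(\pr_\Lambda=s)-\pr_\Lambda(\nu)(s)|>\delta$. Extend each $t_n$ to a full configuration arbitrarily and form $\tilde\nu_n$ that uses $\nu^{t_n}_{M_n}$ on $M_n$ and the fixed outside values. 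Every weak$^*$-accumulation point of $\tilde\nu_n$ satisfies the DLR equations on each fixed finite window (since $M_n$ eventually swallows any window together with its $\partial^{2D}$-boundary), hence is a Gibbs measure for $(H,\psi)$; by uniqueness it equals $\nu$, contradicting the lower bound on a finite-dimensional marginal. This is the step I expect to be the main obstacle.

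Finally, combining the two pieces: subtracting the DLR representations for $\nu$ and $\nu'$ and using the uniform bound $|\nu^t_M(\pr_\Lambda = s) - \pr_\Lambda(\nu)(s)| \leq \delta$,
\begin{equation*}
\bigl|\pr_\Lambda(\nu')(s)-\pr_\Lambda(\nu)(s)\bigr| \;\leq\; \delta \bigl(\lVert\pr_{\partial^{2D}M}\nu'\rVert + \lVert\pr_{\partial^{2D}M}\nu\rVert\bigr) \;=\; 2\delta,
\end{equation*}
for each $s$. Choosing $\delta := \varepsilon / (2|A|^{|\Lambda|})$ and summing over $s$ yields $\lVert\pr_\Lambda(\nu)-\pr_\Lambda(\nu')\rVert \leq \varepsilon$, as required. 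The argument does not use any quantitative mixing; it relies only on compactness of $M(A^{V(H)})$, the fact that DLR passes to weak$^*$-limits, and uniqueness of the Gibbs measure for $(H,\psi)$.
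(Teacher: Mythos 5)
Your proposal is correct, and it reaches the conclusion through the same underlying engine as the paper --- compactness of the configuration space, the fact that the DLR equations survive weak* limits, and uniqueness of the Gibbs measure for $(H,\psi)$ --- but the decomposition is genuinely different. The paper argues by contradiction directly at the level of the perturbed structures: it takes a sequence $(H^R,\psi^R)$ of modifications with Gibbs measures $\nu^R$ violating the conclusion, extracts convergent finite-dimensional marginals by a diagonal argument (the $\nu^R$ live on different spaces $A^{V(H^R)}$, with the balls $\Ball(v,R)$ identified), glues them by Kolmogorov extension, and observes that the limit is a Gibbs measure for $(H,\psi)$ distinct from $\nu$ on $\Lambda$, contradicting uniqueness. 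You instead isolate an intermediate lemma --- uniform convergence, over \emph{all} boundary conditions $t$, of the finite-volume kernel $\nu^t_M$ on $M=\Ball(v,R-2D)$ to $\pr_\Lambda(\nu)$ --- prove it by the same compactness-plus-uniqueness contradiction (carried out entirely on the fixed space $A^{V(H)}$, which spares you the diagonal/Kolmogorov bookkeeping over varying graphs), and then transfer to an arbitrary Gibbs measure $\nu'$ of the modified structure via the law of total probability, using that the kernel only involves potentials of vertices in $M\cup\partial^D M$ and hence is common to both structures. What your route buys is a cleaner, reusable quantitative statement (uniformity in the boundary condition, hence simultaneously in every admissible outside modification) and a compactness argument on a single fixed space; what it costs is the extra representation step and the consistency check that conditioning $\nu^{t}_{M}$ (and its weak* limits) on a smaller window reproduces the specification --- a check you flag correctly and which is at the same level of detail that the paper itself leaves implicit when it asserts that its limit measure $\nu_1$ is Gibbs.
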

\begin{proof}
Assume the contrary. There is an $\varepsilon>0$ and a sequence $(H^R, \psi^R)$ of Gibbs structures such that $(H^R, \psi^R)$ is isomorphic to $(H, \psi)$ in the $\Ball(v,R)$, and for any $(H^R,\psi^R)$ there is a Gibbs measure $\nu^R$ such that
\begin{equation*}
\lVert \pr_{\Lambda}(\nu) - \pr_{\Lambda}(\nu^R) \rVert > \varepsilon
\end{equation*}
if $\Lambda \subset \Ball(v,R)$.
For each $W \Subset V(H)$, the projection $\pr_W(\nu^R)$ is well-defined as soon as $W \subset \Ball(v,R)$.
Using the diagonal trick, we may extract a subsequence, and assume that projections $\pr_W (\nu^i)$ converge for all finite subsets $W$ of $V(H)$. For each $W$ consider the limiting measure $\nu_W$. Using the Kolmogorov theorem, we may extend this collection of measures to a measure $\nu_1$ on $A^{V(H)}$. This has a property that $\lVert \pr_{\Lambda}(\nu_1) - \pr_{\Lambda}(\nu) \rVert > \varepsilon$.
We also note that $\nu_1$ is a Gibbs measure for $(H, \psi)$. This contradicts our assumption that $\nu$ is the unique Gibbs measure for $(H, \psi)$. 
\end{proof}

\begin{lem}\label{lem: covariance bound}
Suppose that graph $H$ with potential $\psi$ has a unique Gibbs measure.
Let $f_1$ be a function defined on $A^{W_1}$ for some finite subset ${W_1}$ of $V(H)$, such that $0 \leq f_1 \leq 1$, and let $v \in W_1$. For every $\varepsilon>0$ there are such positive integers $R' < R''$ with $W_1 \subset (\Ball(v',R'))$ that if $(H,\psi)$ is $(v,v',R'')$-partially isomorphic to $(H',\psi')$, and $\nu'$ is a Gibbs measure (that is not necessarily unique) for $(H',\psi')$, then the following holds. 

For any function $f_2$ defined on $A^{W_2}$, where $W_2 \subset V(H') \setminus \Ball(v', R')$, such that $0 \leq f_2 \leq 1$, we have: 
\begin{equation*}
\big\lvert\Cov_{\nu'}(f_1',f_2) \big\rvert\leq \varepsilon, 
\end{equation*} 
where $f_1'$ is the transfer of $f_1$ from $H$ to $H'$ via the partial isomorphism map.
\end{lem}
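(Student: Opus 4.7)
The plan is to reduce the covariance bound to a decorrelation estimate for $f_1'$ alone via the Markov property, and then transfer that estimate from the unique Gibbs measure $\nu$ on $(H,\psi)$ to the possibly non-unique $\nu'$ on $(H',\psi')$ using the partial isomorphism.

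First, since $f_2$ is measurable with respect to $\sB(V(H')\setminus\Ball(v',R'))$, the tower property gives
\[
\Cov_{\nu'}(f_1',f_2) = \Cov_{\nu'}\bigl(\expect_{\nu'}(f_1'\mid \sB(V(H')\setminus\Ball(v',R'))),\,f_2\bigr),
\]
and the Markov property of the Gibbs measure $\nu'$ on $(H',\psi')$ lets us replace the conditioning $\sigma$-algebra by $\sB(\partial^{2D}\Ball(v',R'))$ (assuming $W_1'\subset\Ball(v',R')$, which we arrange). Cauchy--Schwarz combined with $0\le f_2\le 1$ then gives
\[
\bigl|\Cov_{\nu'}(f_1',f_2)\bigr| \le \bigl\|\expect_{\nu'}(f_1'\mid \sB(\partial^{2D}\Ball(v',R'))) - \expect_{\nu'} f_1'\bigr\|_{2,\nu'}.
\]
So it suffices to make this right-hand side smaller than $\varepsilon$.

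On the reference side $(H,\psi,\nu)$, invoke (\ref{eq: martingale convergence}) together with the Markov property to choose $R'$ (with $W_1\subset\Ball(v,R')$) such that the analogous quantity for $\nu$ is at most $\varepsilon/3$. The key observation is that the conditional expectation kernel $g(b)=\expect(f_1\mid\pr_{\partial^{2D}\Ball(v,R')}=b)$ is given by the Gibbs specification restricted to $\Ball(v,R'+D)$, and therefore, thanks to the $(v,v',R'')$-partial isomorphism (once $R''\ge R'+D$), the corresponding kernel for $\nu'$ is literally the same function $g$ after the identification induced by $t$. Hence both $\|g-\expect_\nu f_1\|_{2,\nu}$ and $\|g-\expect_{\nu'} f_1'\|_{2,\nu'}$ depend only on the projection of the ambient measure onto the finite set $\Lambda^*:=\Ball(v,R')\cup\partial^{2D}\Ball(v,R')$, and they vary continuously with that projection (cf.\ Remark \ref{rem: continuity of conditioning}), since $\nu$ is positive on $A^{\Lambda^*}$.

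It remains to show that $\pr_{\Lambda^*}(\nu')$ is close in total variation to $\pr_{\Lambda^*}(\nu)$ once $R''$ is large. This is an analogue of Lemma \ref{lem: stability of unique}, but stated for a sequence of graphs $(H^{R''},\psi^{R''})$ that are only $(v,v^{R''},R'')$-partially isomorphic to $(H,\psi)$ rather than genuine modifications of $\psi$ on $H$ itself. I would prove it by exactly the same contradiction argument: assume a sequence of Gibbs measures $\nu^{R''}$ on structures $(H^{R''},\psi^{R''})$ with $\|\pr_{\Lambda^*}(\nu)-\pr_{\Lambda^*}(\nu^{R''})\|>\varepsilon$; extract via diagonalization a subsequence so that every finite-set projection converges (through the partial isomorphisms); glue the limits by Kolmogorov's theorem to a measure $\nu_1$ on $A^{V(H)}$; verify the DLR equations for $(H,\psi)$ by noting that for every fixed finite $\Lambda$, eventually the Gibbs specifications on a neighborhood of $\Lambda$ coincide with those of $(H,\psi)$, so the DLR identity passes to the limit; conclude $\nu_1=\nu$, contradicting the gap.

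The main obstacle is precisely this last step: carefully adapting the stability lemma to the two-graph setting, in particular making sure the DLR equations pass to the limit despite the sequence of Gibbs measures living on different graphs. Once this is in place, choosing $R''$ large enough to force the total variation gap below the continuity threshold from the previous paragraph gives the desired bound $\varepsilon$.
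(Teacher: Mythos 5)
Your proposal is correct and follows essentially the same route as the paper: reduce the covariance to $\lVert \expect_{\nu'}(f_1'\mid \sB(\partial^{2D}\Ball(v',R'))) - \expect_{\nu'} f_1'\rVert_2$ via the Markov property and Cauchy--Schwarz, transfer this finite-dimensional quantity back to $(H,\psi)$ by continuity in the projection (Remark \ref{rem: continuity of conditioning}) plus stability of the unique Gibbs measure, and finish with the martingale convergence statement (\ref{eq: martingale convergence}). The ``main obstacle'' you flag --- stability across two different graphs that only agree on a ball --- is exactly what the paper's Lemma \ref{lem: stability of unique} is meant to provide (its proof is already phrased for a sequence of Gibbs structures $(H^R,\psi^R)$ merely isomorphic to $(H,\psi)$ on $\Ball(v,R)$), and your proposed compactness/Kolmogorov/DLR-limit argument is the same one used there.
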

\begin{proof}
Denote $\sA = \sB(V(H') \setminus \Ball(v',R'))$ and $\sC = \sB(\partial^{2D}\Ball(v',R'))$.
\begin{multline*}
\big\lvert \Cov(f'_1,f_2)\big\rvert  = \big\lvert \expect(f'_1 - \expect f'_1)(f_2 - \expect f_2)\big\rvert =\\
\Big\lvert\expect\Big(\expect\big((f'_1 - \expect f'_1)(f_2 - \expect f_2) \,\big\vert \sA\big)\Big)\Big\rvert = \\ 
\Big\lvert\expect\Big(\expect\big((f'_1 - \expect f'_1) \,\big\vert \sA\big) \cdot \big(f_2 - \expect f_2 \big)\Big)\Big\rvert.
\end{multline*}
The last equality is due to the fact that if $F_1$ and $F_2$ are two $\eltwo$-functions and $F_2$ is $\sA$-measurable, then $\expect\big( F_1 \cdot F_2 \vert \sA \big) = \expect\big( F_1 \vert \sA \big) \cdot F_2$ almost everywhere. We carry on using the Cauchy-Bunyakovsky-Schwartz inequality and then the Markov property:
\begin{multline*}
\Big\lvert\expect\Big(\expect\big((f'_1 - \expect f'_1) \,\big\vert \sA\big) \cdot \big(f_2 - \expect f_2 \big)\Big)\Big\rvert \leq \Big\lVert\expect\big((f'_1 - \expect f'_1) \,\big\vert \sA\big)\Big\rVert_2 =\\ \big\lVert \expect(f'_1 \;\vert \sA ) - \expect f'_1 \big\rVert_2 = \big\lVert \expect(f'_1 \;\vert \sC ) - \expect f'_1 \big\rVert_2.
\end{multline*}
If $R''$ is big enough, then we can transfer the expression from graph $H'$ back to $H$ using Remark \ref{rem: continuity of conditioning} and Lemma \ref{lem: stability of unique}. Then we apply again the Markov property to change the conditioning subalgebra: 
\begin{equation*}
\big\lVert \expect(f'_1 \;\vert \sC ) - \expect f'_1 \big\rVert_2 \leq \varepsilon/2 + \big\lVert \expect(f_1 \;\vert \sC' ) - \expect f_1 \big\rVert_2 = \varepsilon/2 + \big\lVert \expect(f_1 \;\vert \sA' ) - \expect f_1 \big\rVert_2,
\end{equation*}
where $\sC' = \sB(\partial^{2D}\Ball(v,R'))$ and  $\sA' = \sB(V(H') \setminus \Ball(v',R'))$. Now we use the martingale convergence (equation \ref{eq: martingale convergence}) to note that 
\begin{equation*}
\big\lVert \expect(f_1 \;\vert \sA' ) - \expect f_1 \big\rVert_2 \leq \varepsilon/2,
\end{equation*}
for big enough $R'$. Alltogether, if we take $R'$ and $R''$ big enough and such that $R'< R''$, then we have 
\begin{equation*}
\big\lvert\Cov_{\nu'}(f_1',f_2) \big\rvert\leq \varepsilon.
\end{equation*}
\end{proof}
 

\section{Nice coverings have good colorings}\label{sec: nice coverings good colorings}

Suppose $\WGN$ is an $N$-fold covering of graph $G$. We note that in this case there is a natutal translation of the potential to this covering. Indeed, for any $u \in V(\WGN)$ let $q$ be a covering map from $H$ to the connected component of $\WGN$ containing $u$. Fix any preimage $u'$ of $u$ under map $q$. Any coloring $\omega \in A^{V(\WGN)}$ can be lifted to a periodic coloring $\omega' \in A^{V(\cover)}$. We now define $\psi^{\WGN}_u(\omega)$ to be $\psi_{q^{-1}(u)}(\omega')$. This construction looks very natural when $D$-ball around $u$ is isomorphic to the corresponding $D$-ball around $u'$. In the latter case we literally translate the potential from $\Ball(u',D)$ to $\Ball(u,D)$. 
In a similar fashion, we can transfer the cellular map $\tau$ to the map $\tau^{\WGN}$ from $A^{V(\WGN)}$ to  $B^{V(\WGN)}$. Namely, we will define $(\tau^{\WGN}(\omega))(u)$ to be $(\tau(\omega'))(u')$.
The transferred potential will define the Gibbs measure  $\nu^{\WGN}$ on $A^{\WGN}$(note that it is defined on a finite set and is unique therefore). Now define $\mu^{\WGN} = \tau^{\WGN}(\nu^{\WGN})$. Remind that for a map $c: \WGN \to B$ and any $e \in E(G)$ we defined the empirical ditribution $\mu_e^c$.

\begin{lem}\label{lem: nice covering has nice coloring}
For any $\varepsilon > 0 $ there is $\delta>0$ and natural numbers $R$ and $N'$, that if an $N$-fold covering $\WGN$ ($N>N'$) of graph $G$ is $(R,\delta)$-nice, then there is a coloring $c: V(\WGN) \to B$ such that $\lVert \mu_e - \mu_e^c \rVert \leq \varepsilon$ for every $e \in E(G)$.
\end{lem}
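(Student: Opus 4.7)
The plan is to construct $c$ probabilistically: sample $\omega \sim \nu^{\WGN}$ and set $c := \tau^{\WGN}(\omega)$, then show that with positive probability $\lVert \mu_e^c - \mu_e \rVert \leq \varepsilon$ holds simultaneously for every $e \in E(G)$. For each $e \in E(G)$ and each $b \in B \times B$ write $\mu_e^c(b) = N^{-1}\sum_{e' \in L_e^{\WGN}} X_{e'}$ with $X_{e'} := \mathbf{1}[(c(u'), c(v')) = b]$ for $e' = \{u', v'\}$. Since $E(G)$ and $B \times B$ are finite, it suffices to estimate $\expect[\mu_e^c(b)]$ and $\Var[\mu_e^c(b)]$ for a single pair $(e, b)$; Chebyshev's inequality together with a union bound then yields positive probability that all of the bounds hold at once, from which a deterministic good $c$ exists.

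For the first moment, let $D_\tau$ denote the locality window of the cellular map $\tau$, so that for a $D_\tau$-nice vertex $u$ the value $(\tau^{\WGN}(\omega))(u)$ depends only on $\omega|_{\Ball(u, D_\tau)}$. If $e'$ is $R$-nice in $\WGN$ with $R$ much larger than $D_\tau$, then $(\WGN, \psi^{\WGN})$ is partially isomorphic to $(\cover, \psi)$ in the $R$-ball around $e'$, so Lemma \ref{lem: stability of unique} (applied with $R$ large enough depending on a tolerance $\varepsilon_1$) gives that the projection of $\nu^{\WGN}$ to $\Ball(e', D_\tau)$ is $\varepsilon_1$-close in total variation to the corresponding projection of $\nu$. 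Hence $|\expect X_{e'} - \mu_e(b)| \leq \varepsilon_1$ for every $R$-nice $e'$, and the at most $\delta N$ non-$R$-nice preimages contribute at most $\delta$ to the average, yielding $|\expect[\mu_e^c(b)] - \mu_e(b)| \leq \varepsilon_1 + \delta$.

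For the second moment, expand $\Var[\mu_e^c(b)] = N^{-2}\sum_{e'_1, e'_2 \in L_e^{\WGN}} \Cov(X_{e'_1}, X_{e'_2})$. Fix $R' < R''$ from Lemma \ref{lem: covariance bound} with tolerance $\varepsilon_2$: whenever $e'_1$ is $R''$-nice and $e'_2$ lies at $\WGN$-distance greater than $R' + D_\tau$ from $e'_1$, the partial isomorphism $(\cover, \psi) \to (\WGN, \psi^{\WGN})$ around $e'_1$, together with the lemma applied to the indicators defining $X_{e'_1}$ and $X_{e'_2}$, gives $|\Cov(X_{e'_1}, X_{e'_2})| \leq \varepsilon_2$. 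The remaining pairs split into three groups: the diagonal (at most $N$), pairs with $e'_1$ not $R''$-nice (at most $\delta N^2$), and pairs with $e'_1$ being $R''$-nice but $e'_2$ within distance $R' + D_\tau$ of $e'_1$ (at most $C \cdot N$ for a constant $C = C(G, R', D_\tau)$ coming from the bounded degree of $\WGN$); each such covariance is bounded by $1$ in absolute value. Summing, $\Var[\mu_e^c(b)] \leq \varepsilon_2 + \delta + (1 + C)/N$, arbitrarily small for $\delta$ small and $N$ large.

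Choosing $\varepsilon_1, \varepsilon_2, \delta$ all much smaller than $\varepsilon$, taking $R$ to exceed both $R''$ and the radius required by Lemma \ref{lem: stability of unique} for tolerance $\varepsilon_1$, and taking $N$ large enough to drown out $(1+C)/N$, completes the argument. The main obstacle is the interlocking choice of scales: Lemma \ref{lem: covariance bound} demands $R''$-niceness in order to certify decorrelation only beyond the strictly smaller radius $R'$, and both must comfortably exceed $D_\tau$; the covering's niceness parameter $R$ must in turn dominate $R''$ and the stability radius from Lemma \ref{lem: stability of unique}. This is the bootstrap alluded to in the introduction: uniqueness of $\nu$ supplies only the qualitative decorrelation coming from martingale convergence \eqref{eq: martingale convergence}, and the second-moment averaging across the $N$ preimages in $L_e^{\WGN}$ is what promotes it to the quantitative almost-sure existence of a good coloring.
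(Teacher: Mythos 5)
Your proposal is correct and follows essentially the same route as the paper: sample $c=\tau^{\WGN}(\omega)$ with $\omega\sim\nu^{\WGN}$, show the expected empirical edge distributions are close to $\mu_e$ using niceness (stability of the unique Gibbs measure), and control the variance by a second-moment argument in which Lemma \ref{lem: covariance bound} handles far pairs while near, diagonal, and non-nice pairs are counted crudely, finishing with Chebyshev. The only differences are presentational — you make the union bound over $(e,b)$ and the explicit role of Lemma \ref{lem: stability of unique} in the first-moment step more explicit than the paper does.
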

 
Note that this lemma implies immediately Lemma \ref{lem: existence of good colorings} since, by Lemma \ref{lem: random covering is nice}, a random covering $\WGN$ for $N>N'$ is $(R,\delta)$-nice with probability bigger than $1-\varepsilon$, provided that $N'$ is big enough. 

\begin{proof}

As in the proof of Lemma 5.4 from \cite{BGH} we will employ a random construction. We will prove that for big enough $N'$ and $R$, and small enough $\delta>0$, a $\mu^{\WGN}$-random $c$ will suffice for us with high probability.

Fix an edge $e \in E(G)$ connecting two vertices $u$ and $v$ of graph $G$.

We note that $\expect_c(\mu_e^c)$ is arbitrarily close to $\mu_e$, provided $N'$, $R$ are large and $\delta$ is small enough. This is due to two observations. First, if $R$ is big, and $i$ is an $R$-nice edge of $\WGN$, then the distribution of $B \times B$-valued random varible $(c(u'),c(v'))$ (where $u'$ and $v'$ are the vertices connected by edge $i$, and these are the respective preimages of vertices $u$ and $v$) is close to $\mu_e$. Second, the contribution of the non-nice vertices is controlled by $\delta$ and could be made arbirarily small.

Let us show now that actually $\mu^c_e$ concentrates near its expected values. 
Take any two elements $b_u, b_v$ from $B$. Edge $e$ has a preimage set $L_e^{\WGN}$ of edges from $\WGN$. Define $(f_{i,e,b_u,b_v})_{i \in L_e^{WGN}}$ to be the variable that equals $1$ whenever $c(u')=b_u$ and $c(v')=b_v$, and $0$ otherwise. Denote 
\begin{equation*}
\overline{f_{e,b_u,b_v}} = \frac1{N} \sum_{i \in L_e^{\WGN}} f_{i,e,b_u,b_v}.
\end{equation*}
Note that $\overline{f_{e,b_u,b_v}} = \mu^c_e(\lbrace (b_u,b_v)\rbrace)$. 
Here lies the main difference of our proof from that of \cite{BGH}. In that paper, a measure concentration bound was used. Since our process is not IID, we need to find another option.
By the Chebyshev inequality, it amounts to prove the following:

\begin{lem}\label{lem: mean value}
For any $\varepsilon>0$ there are large enough $N'$ and $R$, and small enough $\delta>0$, such that the following holds. For any $(R,\delta)$-nice $N$-fold covering $\WGN$ with $N>N'$, we have:
\begin{equation*}
\Var( \overline{f_{e,b_u,b_v}}) < \varepsilon.
\end{equation*}

\end{lem}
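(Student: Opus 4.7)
The plan is to expand the variance as a sum of covariances over ordered pairs of edges in $L_e^{\WGN}$ and control it term-by-term via Lemma \ref{lem: covariance bound}. Specifically, writing
\[
\Var(\overline{f_{e,b_u,b_v}}) = \frac{1}{N^2}\sum_{i,j \in L_e^{\WGN}} \Cov_{\nu^{\WGN}}\bigl(f_{i,e,b_u,b_v},\, f_{j,e,b_u,b_v}\bigr),
\]
and letting $D_\tau$ denote the dependency radius of the cellular map $\tau$ (furnished by Curtis--Hedlund--Lyndon), each $f_{i,e,b_u,b_v}$ is supported on $\Ball(u'_i,D_\tau) \cup \Ball(v'_i,D_\tau)$, where $u'_i,v'_i$ are the endpoints of edge $i$. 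I would then split the sum according to whether the pair is ``nice and well-separated'' or not.

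Given a target $\varepsilon > 0$, first apply Lemma \ref{lem: covariance bound} to $(\cover,\psi)$ at a lift $v$ of an endpoint of $e$, with $f_1$ the $\{0,1\}$-valued indicator that the cellular image of $\omega$ on the chosen $D_\tau$-neighborhood of the lifted edge displays the pattern $(b_u, b_v)$. This produces radii $D_\tau + 1 < R' < R''$ such that any $(v,v',R'')$-partial isomorphism delivers a covariance bound of $\varepsilon' := \varepsilon/4$ against any $[0,1]$-valued partner function supported outside the corresponding $R'$-ball. Set $R := R''$ in the statement of Lemma \ref{lem: nice covering has nice coloring}.

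Next, partition the pairs $(i,j)$ into three classes: (I) the diagonal $i=j$ together with all pairs in which at least one of $i,j$ is not $R$-nice, of total cardinality at most $N + 2\delta N^2$ by $(R,\delta)$-niceness, contributing at most $1/N + 2\delta$ since each covariance is bounded by $1$; (II) pairs where both $i,j$ are $R$-nice but some endpoint of $j$ lies within graph-distance $R' + D_\tau$ of an endpoint of $i$, of which there are at most $CN$ for some constant $C = C(G,R',D_\tau)$ by bounded degree of $\WGN$, contributing $O(C/N)$; (III) pairs where both are $R$-nice and the support of $f_{j,e,b_u,b_v}$ is disjoint from $\Ball(u'_i, R')$, where Lemma \ref{lem: covariance bound} applies and bounds the covariance by $\varepsilon'$. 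Choosing $\delta < \varepsilon/8$ and $N' > 4C/\varepsilon$ then yields $\Var(\overline{f_{e,b_u,b_v}}) < \varepsilon$ for $N > N'$.

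The main obstacle I expect is justifying the use of Lemma \ref{lem: covariance bound} with the finite-graph measure $\nu^{\WGN}$ in the role of the partner Gibbs measure. The lemma was stated for precisely this kind of asymmetric setup: the base structure $(\cover,\psi)$ carries the unique Gibbs measure, while the partner $(\WGN,\psi^{\WGN})$ may in principle admit many Gibbs measures (though on the finite $\WGN$ only $\nu^{\WGN}$ arises). The $R''$-niceness of edge $i$ supplies the required $(v,u'_i,R'')$-partial isomorphism, because the definition of $\psi^{\WGN}$ literally transfers $\psi$ wherever the $D$-neighborhoods are locally isomorphic. Once this structural point is in place, the three-class bookkeeping is routine.
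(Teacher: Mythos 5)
Your proposal is correct and follows essentially the same route as the paper: expand the variance into pairwise covariances, bound the diagonal and the pairs involving non-nice edges by $1/N+2\delta$, bound the boundedly many ``close'' nice pairs by a constant over $N$, and apply Lemma \ref{lem: covariance bound} (via the partial isomorphism supplied by $R$-niceness, with $(\cover,\psi)$ as the unique-Gibbs base and $(\WGN,\psi^{\WGN})$ as the partner structure) to the well-separated nice pairs. The structural point you flag about using the finite-cover Gibbs measure as the partner measure is exactly how the paper uses its covariance lemma, so there is no gap.
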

The rest is devoted to the proof of the lemma above.

Let's explain the variant of the second-moment argument needed. Let $(Y_i)_{i =1 \ldots N}$ be a collection of random variables on the same probability space with zero expectation each (we can alway obtain the latter by subtracting respective expectation). Assume also that $\lvert Y_i \rvert  \leq 1$ almost surely. Let
\begin{equation*}
\overline{Y} = \frac{1}{N}\sum_{1 \leq i \leq N} Y_i.
\end{equation*}

We want to show that for big $N$, the second moment of $\overline{Y}$ is close to $0$. Of course we will need some additional assumptions.
Let's expand:

\begin{equation}\label{eq: second moment}
\expect \overline{Y}^2 = \frac{1}{N^2} \sum_{ i } \expect Y_i^2 + \frac{1}{N^2}\sum_{i \neq j} \expect  Y_i  Y_j.
\end{equation}
Note that the first summand vanishes as $N$ gets bigger.

If $Y_i$'s are pairwise independent, then the second term of expression \ref{eq: second moment} is zero, and we are done. This is classical Chebyshev's law of large numbers. Let us weaken the independence assumption a little. Assume that each $Y_i$ ``interacts'' with the uniformly bounded number of $Y_j$'s. That is there is a contant $K$(uniform in $N$) such that for each $i$ there are at most $K$ of $j$'s with $Y_i$ and $Y_j$ being pairwise dependent. In that case the second summand of \ref{eq: second moment} is bounded by $K/N$, and we obtain the desired.

For our exposition we need a further weakening. Roughly speaking, we say that that each $Y_i$ interacts in a ``significant way'' only to a uniformly bounded number of $Y_j$'s. We assume that all pairs can interact, but for each $\varepsilon'>0$ there is a number $K_{\varepsilon'}$(uniform in $N$) such that for each $i$ there are at most $K_{\varepsilon'}$ such $j$'s that $\lvert \expect Y_i Y_j\rvert > \varepsilon'$. 
So there are ``far'' pairs $(i,j)$ with covariance less than $\varepsilon'$ and the ``close'' ones with covariance not less than $\varepsilon'$.
This assumption implies that the second term of the second moment is bounded by $\varepsilon' + K_{\varepsilon'}/N$. 

We need to take into account now that some elements $i$ are ``bad'' = ``not nice''(we don't have the bound on interaction for them as described above), but their number is limited by $\delta N$. In our expression for the second moment we will get:

\begin{multline}\label{eq: enhanced second moment}
\expect \overline{Y}^2 = \frac{1}{N^2} \sum_{ i } \expect Y_i^2 + \frac{1}{N^2}\sum_{i \neq j} \expect  Y_i  Y_j =\\ \frac{1}{N^2} \sum_{ i } \expect Y_i^2 + \frac{1}{N^2}\sum_{i \neq j \atop \text{nice}} \expect  Y_i  Y_j +  \frac{1}{N^2}\sum_{i \neq j \atop \text{$i$ or $j$ bad}} \expect  Y_i  Y_j \leq \\ 
\frac{1}{N} + \varepsilon' + \frac{K_{\varepsilon'}}{N} + 2 \delta.
\end{multline}

We return to the proof of Lemma \ref{lem: mean value}. 
By Lemma \ref{lem: covariance bound}, we can take $R$ so big that for any two $R$-nice edges $i,j \in L_e^{\WGN}$ that are $2R$ apart(in the usual graph distance on $\WGN$), we will have 
\begin{equation*}
\Cov(f_{i,e,b_u,b_v},f_{j,e,b_u,b_v}) \leq \varepsilon/4.
\end{equation*}
So for any $R$-nice edge $i$ from $L_e^{\WGN}$ there are no more than  $2R (\deg G)^{2R+2}$ $R$-nice edges $j$  from $L_e^{\WGN}$ such that 

\begin{equation*}
\Cov(f_{i,e,b_u,b_v},f_{j,e,b_u,b_v}) > \varepsilon/4.
\end{equation*}
So in \ref{eq: enhanced second moment} we have $\varepsilon' = \varepsilon/2$ and $K_{\varepsilon'} = 2R (\deg G)^{2R+2}$. Now if we take $\delta$ smaller than $\varepsilon/2$, then for big enough $N$, the variance of $\overline{f_{e,b_u,b_v}}$ will be smaller than $\varepsilon$. 
This finishes the proof of Lemma \ref{lem: mean value} and thus of Lemmata  \ref{lem: nice covering has nice coloring} and \ref{lem: existence of good colorings}. 

\end{proof}

\end{document}